\newtheorem{theorem}{Theorem}
\theoremstyle{definition}
\newtheorem{example}{Example}
\theoremstyle{remark}
\numberwithin{equation}{section}
\begin{document}
\title{ On finitely presented algebras }

%

\author{Adel Alahmadi}
\address{Department of Mathematics, Faculty of Science, King Abdulaziz University, P.O.Box 80203, Jeddah, 21589, Saudi Arabia}
\email{analahmadi@kau.edu.sa}
\author{Hamed Alsulami}
\address{Department of Mathematics, Faculty of Science, King Abdulaziz University, P.O.Box 80203, Jeddah, 21589, Saudi Arabia}
\email{hhaalsalmi@kau.edu.sa}
%
%

\keywords{associative algebra, finitely presented algebra}

\maketitle

\begin{abstract}
We prove that if $A$ is finitely presented  algebra with idempotent $e$ such that $A=AeA=A(1-e)A$ then the algebra $eAe$ is finitely presented.
\end{abstract}

\maketitle

\section{Introduction}
Let $F$ be a field and let $A$ be an associative $F$-algebra generated by a finite collection of elements $a_1,\cdots, a_m.$

\medskip

Consider the free associative algebra $F\langle x_1,\cdots, x_m\rangle$ and the homomorphism $F\langle x_1,\cdots, x_m\rangle \rightarrow^{\varphi} A, \, x_i\mapsto a_i.$

\medskip

We say that the algebra $A$ is finitely presented (f.p.) if the ideal $I=\ker\varphi$ is finitely generated as an ideal.
This property does not depend on a choice of a generating system of $A$ as long as this system is finite.

\section{ Main Result}

\begin{theorem}\label{thm1} Let $A=A_{\overline{0}}+A_{\overline{1}}$ be a $\mathbb{Z}/2\mathbb{Z}$- graded associative algebra, such that $A_{\overline{0}}=A_{\overline{1}}A_{\overline{1}}.$
If the algebra $A$ is finitely presented then so is the algebra $A_{\overline{0}}.$

\end{theorem}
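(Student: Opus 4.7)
\medskip
The plan is to reduce the claim to the following assertion about the free algebra: if $I\triangleleft F\langle x_1,\dots,x_n\rangle$ is a finitely generated graded two-sided ideal (each $x_i$ in degree $\bar 1$), then $I_{\bar 0}:=I\cap F\langle x\rangle_{\bar 0}$ is finitely generated as a two-sided ideal of the even subalgebra $F\langle x\rangle_{\bar 0}$.

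First I would choose generators of $A$ carefully. Any finite generating set can be refined to a homogeneous one by passing to homogeneous components, and by $A_{\bar 0}=A_{\bar 1}A_{\bar 1}$ each generator in $A_{\bar 0}$ can be replaced by finitely many elements of $A_{\bar 1}$ whose products recover it. So we may take $A=\langle b_1,\dots,b_n\rangle$ with $b_i\in A_{\bar 1}$, and then $A_{\bar 0}$ is generated as an algebra by the $n^2$ elements $b_ib_j$. Consider next the surjection $\varphi\colon F\langle x_1,\dots,x_n\rangle\to A$, $x_i\mapsto b_i$; placing each $x_i$ in degree $\bar 1$ makes $\varphi$ graded, so $I=\ker\varphi$ is a graded ideal and admits a finite set of homogeneous generators $f_1,\dots,f_s$. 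A short combinatorial check shows that $F\langle x\rangle_{\bar 0}$ is itself free on the $n^2$ generators $y_{ij}:=x_ix_j$, since every even word $x_{i_1}\cdots x_{i_{2k}}$ pairs up uniquely from the left as $y_{i_1i_2}\cdots y_{i_{2k-1}i_{2k}}$. Under the identification $F\langle y_{ij}\rangle\cong F\langle x\rangle_{\bar 0}$, $\varphi$ restricts to a surjection onto $A_{\bar 0}$ with kernel $I_{\bar 0}$, so everything comes down to showing $I_{\bar 0}$ is finitely generated as an ideal of $F\langle y_{ij}\rangle$.

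For that step I would propose the explicit finite generating set
\[
S=\{f_k:f_k\text{ even}\}\cup\{x_if_kx_j:f_k\text{ even},\;1\le i,j\le n\}\cup\{x_if_k,\,f_kx_j:f_k\text{ odd},\;1\le i,j\le n\},
\]
every element of which lies in $F\langle x\rangle_{\bar 0}=F\langle y_{ij}\rangle$. The hard part, and essentially the only thing to check, is the parity bookkeeping: given an element $\sum_\alpha u_\alpha f_{k_\alpha}v_\alpha\in I_{\bar 0}$, discard the terms of odd degree and rewrite each remaining term $uf_kv$ by peeling one $x_i$ off the right end of $u$ (when $u$ is odd) and one $x_j$ off the left end of $v$ (when $v$ is odd); the resulting ``core'' is an element of $S$, flanked by words in the $y_{ij}$. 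One must verify that every parity combination compatible with $|u|+|f_k|+|v|\equiv\bar 0$ is handled (for $f_k$ even: $u,v$ both even or both odd; for $f_k$ odd: exactly one of $u,v$ odd) and that the peeling is always legal, which it is because an odd-length word is nonempty. Together with the identification $A_{\bar 0}=F\langle y_{ij}\rangle/I_{\bar 0}$, this completes the reduction and proves the theorem.
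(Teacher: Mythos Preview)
Your proof is correct and follows essentially the same route as the paper: choose odd generators, present $A$ via a graded free algebra whose even part is free on the $x_ix_j$, and show that $I_{\bar 0}$ is generated over $F\langle x\rangle_{\bar 0}$ by the finite set $S$; your set $S$ coincides exactly with the paper's set $M'$, and your parity bookkeeping is precisely the decomposition the paper records via $F\langle X\rangle_{\bar 1}=F\langle X\rangle_{\bar 0}X=XF\langle X\rangle_{\bar 0}$.
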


\medskip

\par Let $A$  be a finitely generated associative algebra with an idempotent  $e$.

\medskip

S. Montgomery and L. Small [2] showed that if $A=AeA$ then the Peirce component $eAe$ is also finitely generated.

\medskip

\begin{theorem}\label{thm2}
Let $A$ be a finitely presented  algebra with an idempotent $e$ such that $A=AeA=A(1-e)A.$ Then the Peirce component $eAe$ is  finitely presented.

\end{theorem}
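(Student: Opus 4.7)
The plan is to deduce Theorem~\ref{thm2} from Theorem~\ref{thm1} by equipping $A$ with a suitable $\mathbb{Z}/2\mathbb{Z}$-grading coming from the Peirce decomposition. Setting $f=1-e$, the decomposition $A=eAe\oplus eAf\oplus fAe\oplus fAf$ suggests
\[
A_{\overline{0}}=eAe\oplus fAf,\qquad A_{\overline{1}}=eAf\oplus fAe.
\]
The first step is to verify that this really is a $\mathbb{Z}/2\mathbb{Z}$-grading: all mixed products like $(eAe)(fAf)$, $(eAe)(fAe)$, $(eAf)(eAf)$ vanish thanks to the orthogonality $ef=fe=0$, and the nonzero products land in the correct summand.

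The heart of the argument is to check that $A_{\overline{0}}=A_{\overline{1}}A_{\overline{1}}$, and this is precisely where both hypotheses enter. From $A=AeA$ every element of $A$ is a sum of products $aeb$, so sandwiching with $f$ gives
\[
fAf=\sum (fae)(ebf)\ \subseteq\ (fAe)(eAf)\ \subseteq\ A_{\overline{1}}A_{\overline{1}}.
\]
Symmetrically, the hypothesis $A=A(1-e)A=AfA$ yields $eAe\subseteq (eAf)(fAe)\subseteq A_{\overline{1}}A_{\overline{1}}$. Together these give $A_{\overline{0}}\subseteq A_{\overline{1}}A_{\overline{1}}$, and the reverse inclusion is part of the grading axioms already checked. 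Hence equality holds, so Theorem~\ref{thm1} applies and tells us that $A_{\overline{0}}$ is finitely presented.

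Finally, I would pass from $A_{\overline{0}}$ to $eAe$ by a short quotient argument. In $A_{\overline{0}}$ the idempotents $e$ and $f$ are central (because $(eAe)(fAf)=(fAf)(eAe)=0$) and sum to $1$, and one computes $A_{\overline{0}}\,f\,A_{\overline{0}}=fAf$. Hence $fAf$ is the two-sided ideal of $A_{\overline{0}}$ generated by the single element $f$, and $A_{\overline{0}}/(f)\cong eAe$. A quotient of a finitely presented algebra by a finitely generated ideal is finitely presented, so $eAe$ is finitely presented, as required.

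The only real obstacle is the identity $A_{\overline{0}}=A_{\overline{1}}A_{\overline{1}}$, and the computation above shows that it uses both halves of the hypothesis $A=AeA=A(1-e)A$ in an essential, symmetric way: drop either hypothesis and one of the summands of $A_{\overline{0}}$ is no longer forced into $A_{\overline{1}}A_{\overline{1}}$. Everything else is bookkeeping with the Peirce decomposition and a single principal-ideal quotient.
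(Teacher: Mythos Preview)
Your argument is correct and follows the paper's proof almost verbatim: the same Peirce $\mathbb{Z}/2\mathbb{Z}$-grading, the same use of both hypotheses to get $A_{\overline{0}}=A_{\overline{1}}A_{\overline{1}}$, and the same appeal to Theorem~\ref{thm1}. The only cosmetic difference is the last step: the paper invokes the standard fact (citing Cohn) that a direct sum $B_1\oplus B_2$ is finitely presented iff each $B_i$ is, whereas you realize $eAe$ as the quotient of $A_{\overline{0}}$ by the principal ideal $(f)$ and use that quotients of finitely presented algebras by finitely generated ideals are finitely presented---an equally valid and arguably more self-contained route to the same conclusion.
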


\begin{proof}[ Proof of Theorem \ref{thm1}]
By our assumption the algebra $A$ is generated by the subspace $A_{\overline{1}}.$ Let $a_1,\cdots, a_m\in   A_{\overline{1}}$ be the generators of $A.$
Consider the free algebra $F\langle X\rangle, X=\{ x_1,\cdots, x_m\},$ and the homomorphism $f\langle X\rangle\rightarrow^{\varphi} A,\, x_i\mapsto a_i.$
The algebra $F\langle X\rangle$ is $\mathbb{Z}/2\mathbb{Z}$-graded, $F\langle X\rangle = F\langle X\rangle_{\overline{0}}+F\langle X\rangle_{\overline{1}},$
where $F\langle X\rangle_{\overline{0}}$ (resp. $F\langle X\rangle_{\overline{1}}$ ) is spanned by words in $X$ of even (resp. odd ) length. It is easy to see that
the homomorphism $\varphi$ is graded: $\varphi(F\langle X\rangle_{\overline{0}})=A_{\overline{0}},\, \varphi(F\langle X\rangle_{\overline{1}})=A_{\overline{1}}.$
The subalgebra $F\langle X\rangle_{\overline{0}}$ is freely generated by $m^2$ elements $x_ix_j, \, 1\leq i,j \leq m.$
Let $I=\ker\varphi, I=I_{\overline{0}}+I_{\overline{1}},\, I_{\overline{i}}=I\cap F\langle X\rangle_{\overline{i}},\, i=0,1.$
Our aim is to show that $I_{\overline{0}}$ is a finitely generated ideal in $F\langle X\rangle_{\overline{0}}.$

Since the algebra $A$ is finitely presented it follows that the ideal $I$ is generated ( as an ideal ) by a finite set $M=M_{\overline{0}}\cup^{.} M_{\overline{1}},$
$M_{\overline{i}}=F\langle X\rangle_{\overline{i}}, i=0,1.$

Consider the finite set $$M'=\{a, x_ib, bx_i, x_i a x_j\mid a\in M_{\overline{0}}, b\in M_{\overline{1}}, 1\leq i,j\leq m\}$$

We claim that $I_{\overline{0}}=F\langle X\rangle_{\overline{0}} M' F\langle X \rangle_{\overline{0}}.$ Indeed,
The equality $I=F\langle X\rangle M F\langle X \rangle$ implies $$ I_{\overline{0}}=F\langle X\rangle_{\overline{0}} M_{\overline{0}} F\langle X \rangle_{\overline{0}}+F\langle X\rangle_{\overline{1}} M_{\overline{0}} F\langle X \rangle_{\overline{1}}+F\langle X\rangle_{\overline{0}} M_{\overline{1}} F\langle X \rangle_{\overline{1}}+F\langle X\rangle_{\overline{1}} M_{\overline{1}} F\langle X \rangle_{\overline{0}}.$$
Now, $F\langle X\rangle_{\overline{1}}=F\langle X\rangle_{\overline{0}} X=X F\langle X \rangle_{\overline{0}}.$ Hence,
$$F\langle X\rangle_{\overline{1}} M_{\overline{0}} F\langle X \rangle_{\overline{1}}\subseteq F\langle X\rangle_{\overline{0}} XM_{\overline{0}}X F\langle X \rangle_{\overline{0}}\subseteq F\langle X\rangle_{\overline{0}} M' F\langle X \rangle_{\overline{0}};$$
$$F\langle X\rangle_{\overline{0}} M_{\overline{1}} F\langle X \rangle_{\overline{1}}\subseteq F\langle X\rangle_{\overline{0}} M_{\overline{1}}X F\langle X \rangle_{\overline{0}}\subseteq F\langle X\rangle_{\overline{0}} M'F\langle X \rangle_{\overline{0}};$$
$$F\langle X\rangle_{\overline{1}} M_{\overline{1}} F\langle X \rangle_{\overline{0}}\subseteq F\langle X\rangle_{\overline{0}}X M_{\overline{1}} F\langle X \rangle_{\overline{0}}\subseteq F\langle X\rangle_{\overline{0}} M' F\langle X \rangle_{\overline{0}}.$$

This proves the claim and finishes the proof of Theorem \ref{thm1}
\end{proof}

The reverse assertion  is not true. There exists a finitely generated $\mathbb{Z}/2\mathbb{Z}$- graded associative algebra $A=A_{\overline{0}}+A_{\overline{1}}$ such that $A_{\overline{0}}=A_{\overline{1}}A_{\overline{1}},$ the algebra $A_{\overline{0}}$ is  finitely presented, but the algebra $A$ is not.

\begin{example}
Let $A=\langle x,y \mid x^2=yxy=0, xy^{2i+1}x=0 \text{ for } i\geq 1\rangle,$ the elements $x,y$ are odd. The subspace $A_{\overline{0}}$ (resp. $A_{\overline{1}}$ ) is spanned by the products of even ( resp. odd )
length in $x,y.$ It is easy to see that the algebra $A$ is not finitely presented. The subalgebra $A_{\overline{0}}$ is generated by the elements $a=xy, b=y^2, c=yx$ and presented by the relations
$cb=ca=ba=a^2=c^2=0.$

\end{example}

\begin{proof}[ Proof of Theorem \ref{thm2}]
 Let $A_{\overline{0}} =eAe+(1-e)A(1-e), A_{\overline{1}} =eA(1-e)+(1-e)Ae.$ Then $A=A_{\overline{0}}+A_{\overline{1}}$ is a $\mathbb{Z}/2\mathbb{Z}$-grading of the algebra $A.$\\
By the assumptions of the theorem $$eAe=eA(1-e)Ae\subseteq A_{\overline{1}}A_{\overline{1}}$$ and $$ (1-e)A(1-e)=(1-e)AeA(1-e)\subseteq A_{\overline{1}} A_{\overline{1}}.$$\\
By Theorem 1 the algebra $A_{\overline{0}}= eAe \oplus (1-e)A(1-e)$ is finitely presented. It is well known [1] that a direct sum of two algebras is finitely presented if and only if both summands are
finitely presented.

Hence the algebra $eAe$ is finitely presented.
\end{proof}

\section*{Acknowledgement}
This project was funded by the Deanship of Scientific Research (DSR), King Abdulaziz University, under Grant No.
(27-130-36-HiCi). The authors, therefore, acknowledge technical and financial support of KAU.


\begin{thebibliography}{99}
\bibitem{aa} P.M. Cohn, \textit{Free Rings and their relations,} 2nd Ed., LMS Monographs, V.19, Academic press, Londan,1985.

\bibitem{bb} S. Montogomery, L.W. Small, \textit{Some remarks on affine rings,} Proc. AMS 98(1986), 537-544.
\end{thebibliography}
\end{document}